\title{M\"obius Polynomials and Splitting Algebras of Direct Products of Posets}
\author{Susan Durst}
\date{\today}
\newcommand{\polynom}{p}
\renewcommand{\phi}{\varphi}
\newtheorem{theorem}{Theorem}[section]
\newtheorem{prop}{Proposition}[section]
\newtheorem{lemma}{Lemma}[section]
\newtheorem{cor}{Corollary}[section]
\theoremstyle{definition}
\newtheorem{defn}{Definition}[section]
\begin{document}

\maketitle

\begin{abstract}
In this paper, we will study the M\"obius polynomial, an invariant of ranked posets that arises in the study of splitting algebras.  We will present a formula for the M\"obius polynomial of the direct product of posets in terms of the M\"obius polynomials of the factors.  We will then use this formula to calculate Hilbert series and graded trace generating functions associated to the splitting algebras of the Boolean algebra and the poset of factors of a natural number $n$.
\end{abstract}

\section{Introduction}

Given a ranked poset $P$ with rank function $\rho$ and unique minimal element
$*$, we define the \textbf{M\"obius polynomial} of the poset $P$ to be
\[\mathcal{M}_P(z)=\sum_{p\leq q\in P}\mu(p,q)z^{\rho(q)-\rho(p)},\]
where $\mu$ is the M\"obius function on the poset $P$.

The M\"obius polynomial was introduced by Retakh, Serconek, and Wilson in 2008, in their study of $A(P)$, the splitting algebra of a poset $P$. \cite{HilbertSeriesPaper}.  In it, they show that the Hilbert series of $A(P)$ is given by
\[H(A(P),z)=\frac{1-z}{1-z\mathcal{M}_P(z)}.\]
The M\"obius polynomial also appears in a 2009 paper by Duffy \cite{ColleensThesis}. In this paper, she studies the
graded trace generating function of an automorphism $\sigma$ of $A(P)$
induced by an automorphism of the poset $P$, also called $\sigma$. She shows
that the graded trace generating function is given by
\[Tr_\sigma(A(\Gamma),z)=\frac{1-z}{1-z\mathcal{M}_{P_\sigma}(z)},\]
where $P_\sigma$ is the subposet of $P$ consisting of the fixed points of $P$
under the automorphism $\sigma$.

In this paper we will use both of these formulas to explore the behavior of the
Hilbert series and trace generating functions of direct products of posets. The
structure of the paper is as follows. In Section~\ref{Definitions}, we will present basic
definitions. In Section~\ref{PolynomialIncidence}, we will define a polynomial generalization of the incidence
algebra.  In Section~\ref{DirectProducts}, we will use this generalization to prove that given posets $P$ and $Q$, we
have
\[\mathcal{M}_{P\times Q}(z)=\mathcal{M}_P(z)\mathcal{M}_Q(z).\]
In Section~\ref{HilbertSeries}, we will use this result to derive a formula for calculating the Hilbert series of a direct product of posets.  We will use this to present a new proof of Theorem~3 from~\cite{HilbertSeriesPaper}, which states that the Hilbert series of the Boolean algebra $2^n$ is given by
\[H(A(2^n,z))=\frac{1-z}{1-z(2-z)^n}.\]
In Section~\ref{TraceFunctions}, we will present a formula for claculating the graded trace generating function of an automorphism $\sigma$ of the algebra $A(P)$ arising from permuting isomorphic copies of $Q_k$ in a poset of the form $P=\prod_{i=1}^n Q_i$.  We will use this in a new proof of Theorem 6.1 from \cite{ColleensThesis}, which states that given an automorphism $\sigma$ of $2^n$ permuting the $n$ atomic elements, with cycle decomposition $\sigma_1\ldots\sigma_r$, with each cycle $\sigma_j$ of length $i_j$, we have
\[Tr_\sigma(A(2^n),z)=\frac{1-z}{1-z\prod_{j=1}^r(2-z^{i_j})}.\]

In Section~\ref{FactorsOfN}, we will explore the poset $P_n$ of factors of $n$. We will
calculate the Hilbert series of $A(P_n)$, and find the graded trace generating
function $Tr_\sigma(A(P_n),z)$ in the case where $\sigma$ is the automorphism
of $A(P_n)$ induced by permuting elements of $P_n$ that correspond to the same
prime factor of $n$.

\section{Basic Definitions And Results}
\label{Definitions}

\begin{defn}
Let $(P,\leq)$ be a poset. Given $p,q\in P$, we write $p\lessdot q$, and say
that $p$ is \textbf{covered by} $q$ if for any $r\in P$ with $p\leq r\leq q$,
we have $r=p$ or $r=q$.
\end{defn}

\begin{defn} A \textbf{ranked poset} is a poset $(P,\leq)$, together with a rank function
$\rho:P\rightarrow\mathbb{N}$ satisfying
\begin{itemize}
\item[(i)] For an element $p\in P$, $\rho(p)=0$ if and only if $p$ is minimal.
\item[(ii)] Whenever $p,q\in P$ such that $p\lessdot q,$ we have
$\rho(q)=\rho(p)+1$.
\end{itemize}
\end{defn}

\begin{defn} A \textbf{generalized ranked poset} is a poset $(P,\leq)$, together with a
generalized rank function $|\cdot|:P\rightarrow\mathbb{N}$ satisfying
\begin{itemize}
\item[(i)] For an element $p\in P$, $|p|=0$ if and only if $p$ is minimal.
\item[(ii)] Whenever $p<q$ in $P$, we have $|p|<|q|$.
\end{itemize}
\end{defn}

Notice that a rank function is always a generalized rank function, and that a
ranked poset is always a generalized ranked poset.

\begin{defn}
Given a locally finite poset $P$, and elements $p,q\in P$,the \textbf{closed
interval} $[p,q]\subseteq P$ is defined by
\[[p,q]=\{r\in P : p\leq r\leq q\}.\]
If $p\not\leq q$, then $[p,q]=\emptyset$. Let $I_P$ be the collection of closed
intervals in $P$. The \textbf{M\"obius function} on the poset $P$ is the
function $\mu:I_P\rightarrow \mathbb{Z}$ given by
\begin{itemize}
\item[(i)]$\mu([p,p])=1$.
\item[(ii)] $\displaystyle{\mu([p,q])=-\sum_{p\leq r< q}\mu(p,q)}$.
\end{itemize}
If $p\not\leq q$, then we define $\mu([p,q])=0$. For ease of notation, we will
write $\mu(p,q)$ rather than $\mu([p,q])$.
\end{defn}

\begin{defn}
Given a finite generalized ranked poset $P$, we define the \textbf{M\"obius
polynomial} associated to the poset $P$ by
\[\mathcal{M}_P(z)=\sum_{p\leq r<q}\mu(p,q)z^{|q|-|p|}.\]
\end{defn}

\begin{defn}
Given a finite ranked poset $P$, we can construct a directed graph $(P,E_P)$
called the \textbf{Hasse diagram} of $P$. In this graph the edge set is given
by
\[E_P=\{(q,p) : p\lessdot q\}.\]
An ordered $n$-tuple of edges $\pi=(e_1,e_2,\ldots,e_n)$ is called a
\textbf{path} if $h(e_i)=t(e_{i+1})$ for $1\leq i<n$. Given a path
$\pi=(e_1,e_2,\ldots,e_n)$, we define $h(\pi)=h(e_n)$, and $t(\pi)=t(e_1)$. We
define an equivalence relation $\approx$ on the collection of paths as follows:
Given two paths $\pi_1$ and $\pi_2$, we say $\pi_1\approx\pi_2$ if and only if
$h(\pi_1)=h(\pi_2)$ and $t(\pi_1)=t(\pi_2)$.
\end{defn}

\begin{defn}
Let $\mathbb{F}$ be a field. Following \cite{AlgebrasAssocToDirGraphs}, we define an $\mathbb{F}$-algebra
$A(P)$ as follows. Let $\mathbb{F}\langle E_P\rangle$ be the free algebra
generated by the edges of the Hasse diagram of $P$. For every path
$\pi=(e_1,\ldots,e_n)$ in the Hasse diagram, we associate a polynomial
$\polynom_\pi\in \mathbb{F}\langle E_P
\rangle[t]$, given by
\[p_\pi(t)=(t-e_1)(t-e_2)\ldots(t-e_n)=\sum_{i=0}^\infty e(\pi,i)t^i\]
The algebra $A(P)$ is defined to be $\mathbb{F}\langle E_P\rangle/R$, where $R$
is the ideal generated by
\[\{e(\pi_1,i)-e(\pi_2,i) : \pi_1\approx\pi_2\}.\]
Since $R$ is a homogeneous ideal, the algebra $A(P)$ inherits the natural
grading from $\mathbb{F}\langle E_P\rangle$.
\end{defn}

\begin{defn}
Given a graded algebra $A=\bigoplus A_i$, the \textbf{Hilbert series} of $A$ is
given by
\[H(A,z)=\sum_{i=0}^\infty \dim(A_i)z^i.\]
\end{defn}

In \cite{HilbertSeriesPaper}, Retakh, Serconek, and Wilson prove the following result: 
\begin{theorem} 
\label{HilbertSeriesResult}
If $A$ is a finite ranked poset with splitting algebra $A(P)$, then the Hilbert series of $A(P)$ is given by
\[H(A(P),z)=\frac{1-z}{1-z\mathcal{M}_P(z)}.\]
\end{theorem}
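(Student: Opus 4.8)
The plan is to compute the Hilbert series by producing an explicit vector-space basis of $A(P)$ and counting its elements degree by degree. Since $A(P)=\mathbb{F}\langle E_P\rangle/R$ is a quotient of a free algebra by a homogeneous ideal, the natural tool is a noncommutative Gröbner basis: fix a monomial order on the words in the edges $E_P$, compute the leading terms of a generating set of $R$, and take as a basis the standard (normal) monomials, i.e.\ those edge-words divisible by no leading term. Then $\dim A(P)_n$ is the number of standard words of length $n$, and the whole problem reduces to counting these and recognizing the resulting generating function.

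The key combinatorial step is to interpret the standard words. The relations $e(\pi_1,i)=e(\pi_2,i)$ for $\pi_1\approx\pi_2$ are exactly the coefficient identities arising from factoring a single polynomial along different maximal chains of a fixed interval $[p,q]$, so their leading terms should let one rewrite an arbitrary edge-word into a canonical form in which the surviving freedom is controlled interval by interval. I would then show that the number of standard configurations supported on a given interval is governed by inclusion--exclusion over its subintervals, which is precisely M\"obius inversion; this is where $\mu(p,q)$, and hence $\mathcal{M}_P(z)$, enters. Bundling one admissible ``step'' of weight $z$ together with the M\"obius-weighted interval data should produce a self-similar decomposition in which appending a block multiplies the generating function by $z\mathcal{M}_P(z)$, so that $H(A(P),z)$ satisfies the functional equation $H(A(P),z)\bigl(1-z\mathcal{M}_P(z)\bigr)=1-z$, the numerator $1-z$ reflecting the initial conditions of the recursion (the degree-$0$ and degree-$1$ terms, since one checks $\dim A(P)_1=\mathcal{M}_P(0)-1$). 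I expect the polynomial incidence algebra of Section~\ref{PolynomialIncidence} to be the right bookkeeping device for carrying out this inversion cleanly.

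The main obstacle is the lower bound on dimension, i.e.\ confluence of the rewriting system. Counting standard words always yields $\dim A(P)_n$ only as an \emph{upper} bound; equality requires that the chosen generators of $R$ actually form a Gröbner basis, so that every overlap ambiguity reduces to zero and no unexpected relation collapses the count. Establishing this uniformly over all finite ranked posets and in every degree --- equivalently, exhibiting a PBW-type basis or an explicit faithful model built directly from chains of $P$ to certify the missing dimensions --- is the technical heart of the argument; once it is in place, the generating-function identity follows formally. An alternative route, available only after one first proves $A(P)$ is Koszul (after using the linear relations to eliminate generators and render the presentation quadratic), would identify the Koszul dual and invoke the numerical identity $H_{A}(z)\,H_{A^!}(-z)=1$, but verifying Koszulity is at least as hard as the confluence argument, so I would treat the direct basis count as the primary strategy.
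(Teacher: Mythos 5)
There is no in-paper proof to compare against: the paper does not prove this statement, but quotes it (as Theorem~\ref{HilbertSeriesResult}) from Retakh, Serconek, and Wilson \cite{HilbertSeriesPaper}, and everything the paper itself proves is built on top of it. Your submission therefore has to stand on its own as a proof, and it does not: it is a strategy outline whose decisive step is explicitly deferred. As you yourself note, counting standard monomials for some monomial order only yields $\dim A(P)_n \leq \#\{\text{standard words of length } n\}$; equality requires showing that the defining relations (or a completion of them) form a noncommutative Gr\"obner basis, i.e.\ that all overlap ambiguities resolve, or equivalently exhibiting a faithful model certifying the lower bound. That confluence statement \emph{is} the theorem, for all practical purposes --- once one has a basis of normal words in each degree, the generating-function bookkeeping is routine --- and nothing in your sketch supplies it or even identifies the leading terms of the relations $e(\pi_1,i)-e(\pi_2,i)$ concretely enough to begin the diamond-lemma verification.

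There is also a gap in the counting half of the plan, independent of confluence. The polynomial $\mathcal{M}_P(z)=\sum_{p\leq q}\mu(p,q)z^{\rho(q)-\rho(p)}$ has negative coefficients in general, so $\frac{1-z}{1-z\mathcal{M}_P(z)}$ cannot literally be the transfer-matrix count of a set of ``blocks'' each weighted by $z\mathcal{M}_P(z)$; the proposed self-similar decomposition of standard words would have to be an inclusion--exclusion (sieve) argument, and turning that heuristic into an actual identity between the normal-word count and the M\"obius-weighted sum is a substantial argument you do not attempt. (Your one concrete check, $\dim A(P)_1=\mathcal{M}_P(0)-1=|P|-1$, is correct, but it is far from forcing the functional equation in all degrees.) For reference, the published proof in \cite{HilbertSeriesPaper} does not go through Gr\"obner bases at all: it rests on an explicit linear basis of $A(\Gamma)$ constructed in earlier work of those authors, together with a filtration/associated-graded argument in which the M\"obius function enters through inversion over intervals of the poset. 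Your program is not unreasonable, but as written it reduces the theorem to two unverified claims --- confluence and the sieve count --- so it cannot be accepted as a proof.
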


\begin{defn}
Let $A=\bigoplus A_i$ be a graded algebra, and let $\sigma:A\rightarrow A$ be a
graded algebra automorphism. Then $\sigma$ acts on each graded piece of $A$ as
a vector space automorphism. The \textbf{graded trace generating function} of
$\sigma$ acting on $A$ is defined to be
\[Tr_\sigma(A,z)=\sum_{i=0}^\infty Tr\left(\sigma|_{A_i}\right)z^i.\]
\end{defn}

\begin{defn}
Let $(P,\leq)$ be a finite ranked poset, and let $\sigma$ be an automorphism of
$P$. We define $(P^\sigma,\leq)$ to be the generalized ranked poset consisting
of the elements of $P$ that are fixed by the automorphism $\sigma$, with
ordering relation inherited from the poset $P$.
\end{defn}

An automorphism $\sigma$ of a poset $P$ induces an automorphism of the Hasse diagram
$(P,E_P)$, and thus of the algebra $A(P)$. We will also call this automorphism
$\sigma$. In \cite{ColleensThesis}, Duffy proves the following result:

\begin{theorem}
\label{GradedTraceResult}
If $\sigma$ is an automorphism of a finite ranked poset $P$, then the graded trace generating function of $\sigma$ acting on $A(P)$ is given
by
\[Tr_{\sigma}(A(P),z)=\frac{1-z}{1-z\mathcal{M}_{P^\sigma}(z)}.\]
\end{theorem}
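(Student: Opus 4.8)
The plan is to prove the trace formula by carrying out the dimension count behind Theorem~\ref{HilbertSeriesResult} ``with traces in place of dimensions.'' Both formulas have the shape $\frac{1-z}{1-z(\bullet)}$, and passing from the Hilbert series to the graded trace amounts to replacing each $\dim A_i$ by $\mathrm{Tr}(\sigma|_{A_i})$; the goal is to show that this substitution turns $\mathcal{M}_P$ into $\mathcal{M}_{P^\sigma}$. I would therefore aim to run the construction of \cite{AlgebrasAssocToDirGraphs} and \cite{HilbertSeriesPaper} equivariantly, treating each graded piece as a representation of $\langle\sigma\rangle$ rather than a bare vector space.

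First I would exhibit a basis of $A(P)$ that is permuted by $\sigma$. The automorphism $\sigma$ of $P$ permutes the edges of the Hasse diagram, sends paths to paths, respects $\approx$, and carries $e(\pi,i)$ to $e(\sigma\pi,i)$; hence it acts on the normal-form monomial basis of $A(P)$ by permuting basis elements. For a permutation action, $\mathrm{Tr}(\sigma|_{A_i})$ equals the number of $\sigma$-fixed basis elements of degree $i$, so that $Tr_\sigma(A(P),z)$ is the generating function enumerating $\sigma$-fixed basis elements by degree. I would sanity-check this on $P=2^2$ with $\sigma$ swapping the two atoms: there one computes $\mathrm{Tr}(\sigma|_{A_1})=1$ even though no generator is fixed, which confirms that the correct basis, not the generating set, must be used.

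The heart of the argument is to identify this fixed-element count with $\frac{1-z}{1-z\mathcal{M}_{P^\sigma}(z)}$, where $P^\sigma$ carries the generalized rank function $|\cdot|=\rho_P|_{P^\sigma}$. I would first verify that this is a legitimate generalized rank function: the global minimum $*$ is $\sigma$-fixed and remains the unique minimum of $P^\sigma$, and $\rho_P$ is strictly monotone, so $p<q$ in $P^\sigma$ forces $|p|<|q|$. In $\mathcal{M}_P$ the coefficient $\mu(p,q)$ may be read, by Philip Hall's theorem, as the reduced Euler characteristic of the order complex $\Delta((p,q))$ of the open interval, while $z^{\rho(q)-\rho(p)}$ records its length. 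Running the count equivariantly replaces each such Euler characteristic by the reduced Lefschetz number $\widetilde L_\sigma(p,q)$ of $\sigma$ acting on $\Delta((p,q))$, and every surviving interval is $\sigma$-stable with both endpoints in $P^\sigma$, so its length is exactly $|q|-|p|$. Thus the trace computation manufactures the ``equivariant Möbius polynomial'' $\sum_{p\le q\in P^\sigma}\widetilde L_\sigma(p,q)\,z^{|q|-|p|}$.

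The main obstacle is the key lemma that $\widetilde L_\sigma(p,q)=\mu_{P^\sigma}(p,q)$ for $p,q\in P^\sigma$; this is precisely what makes the \emph{intrinsic} Möbius function of the fixed subposet appear rather than the ambient values, and naive restriction genuinely fails, since for $P=2^2$ one has $\mu_P(*,\hat1)=1$ but $\mu_{P^\sigma}(*,\hat1)=-1$. I would prove the lemma by a fixed-point argument: the subcomplex of $\Delta((p,q))$ fixed pointwise by $\sigma$ consists exactly of the chains of $\sigma$-fixed elements, so it is literally the order complex of the open interval $(p,q)$ computed inside $P^\sigma$; the simplicial Lefschetz fixed-point theorem then gives $\widetilde L_\sigma(p,q)=\widetilde\chi\bigl(\Delta((p,q)_{P^\sigma})\bigr)$, which is $\mu_{P^\sigma}(p,q)$ by Hall's theorem. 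With this lemma the equivariant Möbius polynomial is literally $\mathcal{M}_{P^\sigma}(z)$, and the equivariant form of the computation assembles into $Tr_\sigma(A(P),z)=\frac{1-z}{1-z\mathcal{M}_{P^\sigma}(z)}$. The remaining work is bookkeeping: confirming that the basis is permuted in every degree, so that ``trace equals fixed count'' holds throughout, and checking that the equivariant enumeration collapses into the same geometric-series shape $\frac{1-z}{1-z(\bullet)}$ as in the non-equivariant case.
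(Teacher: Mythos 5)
The first thing to note is that the paper itself contains no proof of Theorem~\ref{GradedTraceResult}: it is quoted from Duffy \cite{ColleensThesis} and used as a black box, so your attempt can only be measured against the strategy of the cited work, not against an in-paper argument. Measured that way, your proposal does contain the correct key insight: the lemma that for $p,q\in P^\sigma$ the reduced Lefschetz number of $\sigma$ on the order complex of the open interval $(p,q)$ equals $\mu_{P^\sigma}(p,q)$. Your sketch of it is sound: an order-preserving bijection that stabilizes a chain setwise must fix it pointwise, so the fixed subcomplex of $\Delta((p,q))$ is precisely the order complex of $(p,q)\cap P^\sigma$, and the Hopf trace formula together with Hall's theorem gives $\widetilde L_\sigma(p,q)=\widetilde\chi\bigl(\Delta((p,q)_{P^\sigma})\bigr)=\mu_{P^\sigma}(p,q)$. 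This correctly explains why the intrinsic M\"obius function of $P^\sigma$, and not the restriction of $\mu_P$, must appear in the formula.

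However, the two steps that would connect this lemma to $A(P)$ are missing, and the first is false as you state it. You claim that $\sigma$ ``acts on the normal-form monomial basis of $A(P)$ by permuting basis elements''; for monomials in the edge generators this is refuted by your own sanity check. In $2^{[2]}$ with the atoms swapped, the four edges fall into two $\sigma$-orbits of size two, so every $\sigma$-stable set of edges has even cardinality; hence no three-element set of edges, i.e.\ no monomial basis of the three-dimensional space $A_1$, is permuted by $\sigma$. You acknowledge that ``the correct basis'' is needed but never produce one, and without a $\sigma$-permuted homogeneous basis the identity ``trace $=$ number of fixed basis elements,'' which drives your entire enumeration, has no support (graded traces of automorphisms can be negative in general, e.g.\ $x\mapsto -x$ on $\mathbb{F}[x]$, so this claim has real content). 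The available repair is in the cited literature: the elements $e(v,k)$, defined as coefficients of $p_\pi(t)$ for any path $\pi$ from $v$ down to $*$ (well defined by the relations of $A(P)$), satisfy $\sigma(e(v,k))=e(\sigma(v),k)$, and the monomial basis of $A(P)$ in these generators constructed in \cite{HilbertSeriesPaper} is cut out by purely order-theoretic conditions and hence is permuted by $\sigma$; but identifying and verifying this is a genuine step, not notation. Second, the passage from counting fixed basis monomials to the sum $\sum_{p\leq q\in P^\sigma}\widetilde L_\sigma(p,q)z^{|q|-|p|}$ is pure analogy in your write-up (``running the count equivariantly''). The Hilbert series formula of Theorem~\ref{HilbertSeriesResult} is not proved by decomposing each $A_i$ into pieces indexed by intervals of $P$; it comes from an inclusion-exclusion/recursion on the monomial basis, so you must show that the $\sigma$-fixed monomials satisfy the same recursion with $P$ replaced by $P^\sigma$ --- this is exactly where your Lefschetz lemma (equivalently, the fact that the alternating chain-counts now run over chains of fixed elements) would actually be used, and it is the substance of Duffy's theorem rather than the ``bookkeeping'' you defer.
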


In this paper, we will use Theorems~\ref{HilbertSeriesResult} and \ref{GradedTraceResult}, together with some original
results regarding the M\"obius polynomial, in order to calculate Hilbert series
and graded trace generating functions related to direct products of posets.

\section{The Polynomial Incidence Algebra}
\label{PolynomialIncidence}

We begin by recalling a definition of the \textbf{incidence algebra} of a
locally finite poset $(P,\leq)$. Let $V_P$ be the vector space over a field
$\mathbb{F}$ with basis $P$. The incidence algebra $I(P)$ is the collection of
linear maps $\varphi:V_P\otimes V_P\rightarrow\mathbb{F}$ satisfying
\[\varphi(p\otimes p^\prime)=0\hspace{8pt}\text{if}\hspace{8pt}p\not\leq
p^\prime,\]
with the algebra operation $\cdot$ given by convolution. That is, for $\varphi$
and $\psi$ in $I(P)$, the map $(\varphi\cdot\psi)$ is given by
\[(\varphi\cdot\psi)(p\otimes q)=\sum_{p\leq r\leq q}\varphi(p\otimes
r)\psi(r\otimes q).\]

The identity of the incidence algebra is the function
\[\delta(p\otimes q)=\left\{
\begin{array}{cl}
0&\text{if }p\neq q\\
1&\text{if }p= q\\
\end{array}\right..\]  
We also define an element $\zeta$ of the incidence algebra, called the
\textbf{zeta function}, given by
\[\zeta(p\otimes q)=\left\{
\begin{array}{cl}
0&\text{if }p\not\leq q\\
1&\text{if }p\leq q\\
\end{array}\right..\]  
The element $\zeta$ has a multiplicative inverse in $I(P)$, given by
$\zeta^{-1}(p \otimes q)=\mu(p,q)$. This is an alternate definition of the
M\"obius function.

Given a generalized ranked poset $P$, we define the \textbf{polynomial
incidence algebra} $I_z(P)$ of $(P,\leq)$ to be the collection of linear maps
$\varphi: V_P\otimes V_P\rightarrow \mathbb{F}[z]$ satisfying
\[\begin{array}{lcc}
\varphi(p\otimes p^\prime)=0&\text{if}&p\not\leq p^\prime,\\
\varphi(p\otimes p^\prime)\in\mathbb{F} z^{|p^\prime|-|p|}&\text{if}&p\leq
p^\prime.\\
\end{array}\]
Again, the operation $\cdot$ will be given by convolution. Notice that if we
substitute 1 for $z$, we will obtain the usual incidence algebra $I(P)$. The
identity of the polynomial incidence algebra is the function $\delta_z,$ given
by
\[\delta_z(p\otimes q)=\left\{
\begin{array}{cl}
0&\text{if }p\neq q\\
1&\text{if }p= q\\
\end{array}\right..\]  
We define $\zeta_z$, the \textbf{polynomial zeta function} by 
\[\zeta_z(p\otimes q)=\left\{
\begin{array}{cl}
0&\text{if }p\not\leq q\\
z^{|q|-|p|}&\text{if }p\leq q\\
\end{array}\right..\]
Notice that $\zeta_z(p\otimes q)=\zeta(p\otimes q)z^{|q|-|p|},$ so if we set
$z=1$, we recover the usual zeta function. The inverse of the polynomial zeta
function is the \textbf{polynomial M\"obius function} $\mu_z$, given by
\[\mu_z(p\otimes q)=\left\{
\begin{array}{cl}
0&\text{if }p\not\leq q\\
\mu(p,q)z^{|q|-|p|}&\text{if }p\leq q\\
\end{array}\right..\]
This is easy to check:
\begin{eqnarray*}
(\zeta_z\cdot\mu_z)(p\otimes q)&=&\sum_{p\leq r\leq q}\zeta_z(p\otimes
r)\mu_z(r\otimes q)\\
&=&\sum_{p\leq r\leq q}\left(\zeta(p\otimes r)z^{|r|-|p|}\right)\left(\mu(r,
q)z^{|q|-|r|}\right)\\
&=&\left(\sum_{p\leq r\leq q}\zeta(p\otimes r)\zeta^{-1}(r\otimes
q)\right)z^{|q|-|p|}
\end{eqnarray*}
Since $\zeta$ and $\zeta^{-1}$ are inverses in $I(P)$, we know that this will
be equal to 1 if $p=q$, and to 0 otherwise. Thus $\zeta_z\cdot\mu_z=\delta_z$.
Similarly, we can show that $\mu_z\cdot\zeta_z=\delta_z$.

If $P$ is a finite generalized ranked poset, then we can find a total ordering of $P$ that respects its partial ordering.  That is, we can write $P=\left\{p_1,p_2,\ldots, p_{|P|}\right\}$ such that $p_i\leq p_j$ implies $i\leq j$.  We know that any linear map $\phi:V_P\otimes V_P\rightarrow \mathbb{F}[z]$ is uniquely defined by its values on $p\otimes p^\prime$ for $p,p^\prime\in \mathbb{F}$, so there exists a unique $|P|$ by $|P|$ matrix $M(\phi)=[a_{ij}]$ with entries in $F[z]$ such that $a_{ij}=\phi(p_i\otimes p_j)$.  The convolution operation corresponds to matrix multiplication in this context, giving us
\[M(\phi\cdot\psi)=M(\phi)M(\psi).\]
Thus $M$ is an injective algebra homomorphism whose image consists of all matrices with entries in $\mathbb{F}[z]$ such that $a_{ij}=0$ whenever $p_i\not\leq p_j$.  Furthermore, for any $v,w\in V_P$, we have $\phi(v\otimes w)=v^TM(\phi)w.$

\section{The M\"obius Polynomial Of Direct Products Of Posets}
\label{DirectProducts}

Let $P$ be a finite generalized ranked poset, and let $V_P$ be the vecor space
over $\mathbb{F}$ with basis $P$. Define the element $\textbf{1}\in V_P$ by
$\textbf{1}=\sum_{p\in P}p$. Then $\textbf{1}\otimes\textbf{1}$ is equal to
$\sum_{p,q\in P}p\otimes q,$ and so in the polynomial incidence algebra
$I_z(P)$, we have
\[\mu_z(\textbf{1}\otimes\textbf{1})=\sum_{p,q\in P}\mu_z(p\otimes
q)=\sum_{p\leq q}\mu(p,q)z^{|q|-|p|}=\mathcal{M}_P(z).\]

\begin{defn}
Given generalized ranked posets $(P,\leq_P)$ and $(Q,\leq_Q)$ with
corresponding generalized rank functions $|\cdot|_P$ and $|\cdot|_Q$, we define
the \textbf{direct product} of the two posets to be $(P\times Q,\leq_{P\times
Q})$, with order relation given by $(p,q)\leq_{P\times Q}(p^\prime,q^\prime)$
if and only if $p\leq_P p^\prime$ and $q\leq_Q q^\prime$, and rank function
given by $|\cdot|_{P\times Q}((p,q))=|p|_P+|q|_Q$.
\end{defn}

\begin{theorem}
\label{isomorphism}
If $(P,\leq_P)$ and $(Q,\leq_Q)$ are finite generalized ranked posets, then 
\[I_z(P\times Q)\cong I_z(P)\otimes I_z(Q).\]
\end{theorem}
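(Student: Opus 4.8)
The plan is to construct an explicit algebra isomorphism $\Phi : I_z(P) \otimes I_z(Q) \to I_z(P \times Q)$ by defining it on the tensor product of simple maps and checking it respects convolution. The natural candidate sends $\varphi \otimes \psi$ to the map $\Phi(\varphi \otimes \psi)$ given by
\[
\Phi(\varphi \otimes \psi)\bigl((p,q) \otimes (p', q')\bigr) = \varphi(p \otimes p')\, \psi(q \otimes q').
\]
First I would check that $\Phi(\varphi \otimes \psi)$ genuinely lands in $I_z(P \times Q)$: the right-hand side vanishes unless both $p \leq_P p'$ and $q \leq_Q q'$, which is exactly the condition $(p,q) \leq_{P \times Q} (p',q')$, and when it does not vanish it is a product of a monomial of degree $|p'|_P - |p|_P$ and one of degree $|q'|_Q - |q|_Q$, hence a scalar times $z^{|(p',q')| - |(p,q)|}$ by the additivity of the rank function on the product. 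So the degree condition defining $I_z(P \times Q)$ is satisfied.

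The central step is to verify that $\Phi$ is an algebra homomorphism, i.e. that it carries the tensor-product multiplication to convolution in $I_z(P \times Q)$. Here I would lean on the key fact that intervals in a product factor: $(p,q) \leq (r,s) \leq (p',q')$ in $P \times Q$ if and only if $p \leq r \leq p'$ in $P$ and $q \leq s \leq q'$ in $Q$. Expanding $\Phi\bigl((\varphi \cdot \varphi') \otimes (\psi \cdot \psi')\bigr)$ on $(p,q) \otimes (p',q')$ gives
\[
\Bigl(\sum_{p \leq r \leq p'} \varphi(p \otimes r)\,\varphi'(r \otimes p')\Bigr)\Bigl(\sum_{q \leq s \leq q'} \psi(q \otimes s)\,\psi'(s \otimes q')\Bigr),
\]
and multiplying out the two sums and reindexing the double sum over pairs $(r,s)$ reproduces exactly the convolution $\bigl(\Phi(\varphi \otimes \psi) \cdot \Phi(\varphi' \otimes \psi')\bigr)\bigl((p,q) \otimes (p',q')\bigr)$. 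This is the heart of the argument but is essentially a bookkeeping computation once the interval-factorization is in hand.

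It then remains to confirm that $\Phi$ is a bijection. For injectivity and surjectivity I would invoke the matrix description at the end of Section~\ref{PolynomialIncidence}: choosing linear extensions of the orders on $P$ and $Q$ induces a compatible linear extension on $P \times Q$, and under the map $M$ the transformation $\Phi$ corresponds to the Kronecker (tensor) product of matrices. Since $\dim_{\mathbb{F}[z]} I_z(P \times Q)$ equals the number of comparable pairs in $P \times Q$, which by the interval-factorization equals the product of the corresponding counts for $P$ and $Q$, a dimension count shows $\Phi$ is an isomorphism; alternatively one checks directly that $\Phi$ sends a basis to a basis. I expect the main obstacle to be purely organizational rather than conceptual: making sure the degree condition $\varphi(p \otimes p') \in \mathbb{F}\,z^{|p'| - |p|}$ is tracked correctly through the product so that the image really lies in $I_z(P \times Q)$ and not merely in the ordinary incidence algebra, and being careful that the reindexing in the homomorphism check is justified precisely by the factorization of product intervals.
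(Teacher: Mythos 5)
Your proposal is correct, and your map $\Phi(\varphi\otimes\psi)\bigl((p,q)\otimes(p',q')\bigr)=\varphi(p\otimes p')\,\psi(q\otimes q')$ is exactly the one the paper uses; the two arguments differ in how they finish. The paper verifies that $(\varphi,\psi)\mapsto\Phi(\varphi\otimes\psi)$ is a balanced bilinear map, invokes the universal property of the tensor product to obtain $\Phi$, and then proves bijectivity by explicitly constructing a two-sided inverse $\Psi$ built from the elementary functions $\phi_{(p,p')}\otimes\phi_{(q,q')}$ supported on single comparable pairs. You instead get bijectivity from a dimension count combined with the observation that $\Phi$ carries the basis $\{\phi_{(p,p')}\otimes\phi_{(q,q')}\}$ to the corresponding basis of elementary functions of $I_z(P\times Q)$ --- equivalently, in the matrix picture, $M(\Phi(\varphi\otimes\psi))=M(\varphi)\otimes M(\psi)$ is a Kronecker product --- which is shorter and makes the structural reason for the isomorphism transparent. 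Your route is also more complete in one respect: the universal property only produces a \emph{linear} map, and the paper never actually checks that $\Phi$ is multiplicative, whereas your interval-factorization computation (the convolution over $(p,q)\leq(r,s)\leq(p',q')$ splitting as a product of convolutions over $p\leq r\leq p'$ and $q\leq s\leq q'$) is precisely that missing verification; the same factorization of comparable pairs is what justifies your dimension count. Two small repairs are needed. First, the relevant dimension is $\dim_{\mathbb{F}}$, not $\dim_{\mathbb{F}[z]}$: the space $I_z(P\times Q)$ is not stable under multiplication by $z$, and the tensor product here is over $\mathbb{F}$. Second, a dimension count alone only shows that injectivity and surjectivity are equivalent, so the basis-to-basis check you offer as an ``alternative'' must be promoted to the actual argument. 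Conversely, for $\Phi$ to be well defined on all of $I_z(P)\otimes I_z(Q)$, and not merely prescribed on simple tensors, you do need the bilinearity check that the paper spells out, though it is routine.
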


\begin{proof}
For ease of notation, we will use $|\cdot|$ as the rank function for $P\times
Q$, $|\cdot|_P$ for the rank function of $P$, and $|\cdot|_Q$ for the rank
function of $Q$.

We know that $I_z(P\times Q)$ is the collection of linear maps $\varphi$ from
$V_{P\times Q}\otimes V_{P\times Q}$ to $\mathbb{F}[z]$ such that
\begin{itemize}
\item[(i)]$\varphi((p,q)\otimes (p^\prime,q^\prime))=0$ if
$(p,q)\not\leq(p^\prime,q^\prime)$.
\item[(ii)] $\displaystyle{\varphi((p,q)\otimes (p^\prime,q^\prime))\in
\mathbb{F} z^{|(p^\prime,q^\prime)|-|(p,q)|}}$ if $(p,q)\leq
(p^\prime,q^\prime)$.
\end{itemize}
The algebra operation is given by convolution.

Consider the map
\[B:(I_z(P)\times I_z(Q))\rightarrow I_z(P\times Q)\]
given by
\[B(\phi_P, \phi_Q)((p,q)\otimes (p^\prime,q^\prime))=\phi_P(p\otimes p^\prime)\phi_Q(q\otimes q^\prime).\]
We can show that $B$ is a balanced product map.  We have
\[B((\phi_P+\phi_P^\prime),\phi_Q)((p,q)\otimes(p^\prime,q^\prime))=(\phi_P(p\otimes p^\prime)
+\phi_P^\prime(p\otimes p^\prime))\phi_Q(q\otimes q^\prime).\]
The right side of the expression expands out to
\[(\phi_P(p\otimes p^\prime)\phi_Q(q\otimes q^\prime))+(\phi_P^\prime(p\otimes p^\prime)\phi_Q(q\otimes q^\prime)),\]
which is equal to
\[B(\phi_P,\phi_Q)((p,q)\otimes(p^\prime,q^\prime))+B(\phi_P^\prime,\phi_Q)((p,q)\otimes(p^\prime,q^\prime)).\]
It follows that $B(\phi_P+\phi_P^\prime,\phi_Q)=B(\phi_P,\phi_Q)+B(\phi_P^\prime,\phi_Q).$  Similarly, we can show that $B(\phi_P,\phi_Q+\phi_Q^\prime)=B(\phi_P,\phi_Q)+B(\phi_P,\phi_Q^\prime)$.

We can also see that for a constant $a\in \mathbb{F}$ we have
\begin{eqnarray*}
B(a \phi_P,\phi_Q)((p,q)\otimes(p^\prime,q^\prime))&=& (a\phi_P(p\otimes p^\prime))(\phi_Q(q\otimes q^\prime))\\
&=& (\phi_P(p\otimes p^\prime))(a\phi_Q(q\otimes q^\prime))\\
&=& B(\phi_P,a \phi_Q)((p,q)\otimes(p^\prime,q^\prime)).
\end{eqnarray*}

Since $B$ is a balanced product from $I_z(P)\times I_z(Q)$ to $I_z(P\times Q)$, the universal property of the tensor product guarantees a unique algebra homomorphism
\[\Phi:I_z(P)\otimes I_z(Q)\rightarrow I_z(P\times Q)\]
such that $\Phi(\phi_P\otimes \phi_Q)=B(\phi_P,\phi_Q)$ for all $\phi_P\in I_z(P)$ and $\phi_Q\in I_z(Q)$.  We wish to show that $\Phi$ is an isomorphism.  We will do so by constructing an inverse function $\Psi:I_z(P\times Q)\rightarrow I_z(P)\otimes I_z(Q)$.  

We begin by defining a function $\phi_{(p,p^\prime)}\in I_z(P)$ for each $p,p^\prime\in P$ and a function $\phi_{(q,q^\prime)}\in I_z(Q)$ for each $q,q^\prime\in I_z(Q)$, as follows:

 If $p\not\leq p^\prime$, then we set $\phi_{(p,p^\prime)}\equiv 0$.  If $p\leq p^\prime$, we set
\[\phi_{(p,p^\prime)}(r\otimes r^\prime)=\left\{
\begin{array}{cl}
z^{\left(|r^\prime|_P-|r|_P\right)}&\text{if }r=p, r^\prime=p^\prime\\
0&\text{otherwise}
\end{array}\right..\]

Similarly, if $q\not\leq q^\prime$, we set $\phi_{(q,q^\prime)}\equiv 0$.  If $q\leq q^\prime$, we set
\[\phi_{(q,q^\prime)}(s\otimes s^\prime)=\left\{
\begin{array}{cl}
z^{\left(|s^\prime|_P-|s|_P\right)}&\text{if }s=q, s^\prime=q^\prime\\
0&\text{otherwise}
\end{array}\right..\]

For any $p,p^\prime\in P$ and $q,q^\prime\in Q$, we have
\[\Phi\left(\phi_{(p,p^\prime)}\otimes \phi_{(q,q^\prime)}\right)((r,s)\otimes (r^\prime,s^\prime))=\left(\phi_{(p,p^\prime)}(r\otimes r^\prime)\right)\left(\phi_{(q,q^\prime)}(s\otimes s^\prime)\right).\]

If $(p,q)\not\leq (p^\prime,q^\prime)$, then $\Phi\left(\phi_{(p,p^\prime)}\otimes \phi_{(q,q^\prime)}\right)\equiv 0$.  Otherwise, it is the function which takes on the value $z^{(|(p^\prime,q^\prime)|-|(p,q)|)}$ at $(p,q)\otimes(p^\prime,q^\prime)$, and $0$ everywhere else.

Note that any element of $I_z(P\times Q)$ can be determed by its values on $(p,q)\otimes (p^\prime,q^\prime)$ for $p,p^\prime\in P$ and $q,q^\prime\in Q$ with $(p,q)\leq (p^\prime,q^\prime)$.  Let $\phi\in I_z(P\times Q)$ be defined by
\[\phi((p,q)\otimes(p^\prime,q^\prime))=a(p,q,p^\prime,q^\prime) z^{(|(p^\prime,q^\prime)|-|(p,q)|)}.\]

We define
\[\Psi(\phi)=\sum_{(p,q)\leq(p^\prime,q^\prime)}a(p,q,p^\prime,q^\prime)\left(\phi_{(p,p^\prime)}
\otimes\phi_{(q,q^\prime)}\right).\]
It follows that 
\[\Phi\circ\Psi(\phi)=\sum_{(p,q)\leq(p^\prime,q^\prime)}a(p,q,p^\prime,q^\prime) \Phi\left(\phi_{(p,p^\prime)}
\otimes\phi_{(q,q^\prime)}\right).\]
The value of this function at $(p,q)\otimes (p^\prime,q^\prime)$ is
\begin{eqnarray*}
a(p,q,p^\prime,q^\prime)\Phi\left(\phi_{(p,p^\prime)}\otimes \phi_{(q,q^\prime)}\right)&=&a(p,q,p^\prime,q^\prime)z^{(|(p^\prime,q^\prime)|-|(p,q)|)}\\
&=&\phi((p,q)\otimes(p^\prime,q^\prime))
\end{eqnarray*}
if $(p,q)\leq(p^\prime,q^\prime)$, and 0 otherwise.  Thus the function is equal to $\phi$, and so $\Phi\circ\Psi$ is the identity on $I_z(P\times Q)$.

Now we notice that any element of $I_z(P)$ is entirely determined by its values on $(p,p^\prime)$ for $p\leq p^\prime$.  This means that each element $\phi_P\in I_z(P)$ can be written as
\[\sum_{p\leq p^\prime}b(p,p^\prime)\phi_{(p,p^\prime)}.\]
Similarly, element $\phi_Q\in I_z(Q)$ can be written as
\[\sum_{q\leq q^\prime}c(q,q^\prime)\phi_{(q,q^\prime)},\]
and any elment of $I_z(P)\otimes I_z(Q)$ can be written as
\[\sum_{\substack{p\leq p^\prime\\q\leq q^\prime}}(b(p,p^\prime)\phi_{(p,p^\prime)})\otimes(c(q,q^\prime)
\phi_{(q,q^\prime)}).\]
If we set $a(p,q,p^\prime,q^\prime)=b(p,p^\prime)c(q,q^\prime)$, then this expression becomes
\[\sum_{\substack{p\leq p^\prime\\q\leq q^\prime}}a(p,q,p^\prime,q^\prime)\left(\phi_{(p,p^\prime)}\otimes \phi_{(q,q^\prime)}\right).\]
Applying $\Phi$ to this expression, we obtain
\[\sum_{\substack{p\leq p^\prime\\q\leq q^\prime}}a(p,q,p^\prime,q^\prime)\Phi\left(\phi_{(p,p^\prime)}\otimes \phi_{(q,q^\prime)}\right)\in I_z(P\times Q),\]
the function which takes each $(p,q)\otimes(p^\prime,q^\prime)$to $a(p,q,p^\prime,q^\prime)z^{(|(p^\prime,q^\prime)|-|(p,q)|)}$ if $(p,q)\leq (p^\prime,q^\prime)$, and to zero otherwise.

Applying $\Psi$ to this expression gives us 
\[\sum_{(p,q)\leq(p^\prime,q^\prime)} a(p,q,p^\prime,q^\prime)\left(\phi_{(p,p^\prime)}\otimes\phi_{(q,q^\prime)}\right)=\sum_{\substack{p\leq p^\prime\\q\leq q^\prime}}(b(p,p^\prime)\phi_{(p,p^\prime)})\otimes(c(q,q^\prime)
\phi_{(q,q^\prime)}).\]
It follows that $\Psi\circ\Phi$ is the identity on $I_z(P)\otimes I_z(Q)$, and thus our proof is complete.
\end{proof}

\begin{theorem}
\label{DirectProductResult}
Let $P$ and $Q$ be finite generalized ranked posets.  Then
\[\mathcal{M}_{P\times Q}(z)=\mathcal{M}_P(z)\mathcal{M}_Q(z).\]
\end{theorem}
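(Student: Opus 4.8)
The plan is to leverage the algebra isomorphism $\Phi$ from Theorem~\ref{isomorphism} together with the identity $\mathcal{M}_P(z)=\mu_z(\textbf{1}\otimes\textbf{1})$ recorded at the start of this section, applied to each of the three posets $P$, $Q$, and $P\times Q$. Writing $\zeta_z^P,\mu_z^P$ for the polynomial zeta and M\"obius functions of $P$, and similarly for $Q$ and $P\times Q$, the goal is to show that $\Phi$ carries $\mu_z^P\otimes\mu_z^Q$ to $\mu_z^{P\times Q}$, after which evaluation at $\textbf{1}\otimes\textbf{1}$ yields the claimed factorization.

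First I would check that $\Phi$ is compatible with the zeta functions. Evaluating $\Phi(\zeta_z^P\otimes\zeta_z^Q)$ on $(p,q)\otimes(p^\prime,q^\prime)$ gives the product $\zeta_z^P(p\otimes p^\prime)\,\zeta_z^Q(q\otimes q^\prime)$ by the defining formula for $\Phi$ on simple tensors. Because the rank function on the direct product is additive, $|(p^\prime,q^\prime)|-|(p,q)|=(|p^\prime|_P-|p|_P)+(|q^\prime|_Q-|q|_Q)$, this product equals $z^{|(p^\prime,q^\prime)|-|(p,q)|}$ exactly when $(p,q)\leq(p^\prime,q^\prime)$ and vanishes otherwise. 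Hence $\Phi(\zeta_z^P\otimes\zeta_z^Q)=\zeta_z^{P\times Q}$.

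Next, since $\Phi$ is an algebra isomorphism it carries inverses to inverses. In the tensor product algebra $I_z(P)\otimes I_z(Q)$ the element $\zeta_z^P\otimes\zeta_z^Q$ is invertible with inverse $\mu_z^P\otimes\mu_z^Q$, because $(\zeta_z^P\otimes\zeta_z^Q)(\mu_z^P\otimes\mu_z^Q)=(\zeta_z^P\cdot\mu_z^P)\otimes(\zeta_z^Q\cdot\mu_z^Q)=\delta_z^P\otimes\delta_z^Q$, the identity element. Applying $\Phi$ and invoking the previous step, I would conclude $\Phi(\mu_z^P\otimes\mu_z^Q)=\bigl(\zeta_z^{P\times Q}\bigr)^{-1}=\mu_z^{P\times Q}$. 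It then remains to evaluate at $\textbf{1}\otimes\textbf{1}$: expanding $\textbf{1}\otimes\textbf{1}=\sum_{(p,q),(p^\prime,q^\prime)}(p,q)\otimes(p^\prime,q^\prime)$ and using the formula for $\Phi$ on simple tensors,
\[\mathcal{M}_{P\times Q}(z)=\mu_z^{P\times Q}(\textbf{1}\otimes\textbf{1})=\sum_{p,p^\prime\in P}\;\sum_{q,q^\prime\in Q}\mu_z^P(p\otimes p^\prime)\,\mu_z^Q(q\otimes q^\prime),\]
and this double sum factors as $\bigl(\sum_{p,p^\prime}\mu_z^P(p\otimes p^\prime)\bigr)\bigl(\sum_{q,q^\prime}\mu_z^Q(q\otimes q^\prime)\bigr)=\mathcal{M}_P(z)\mathcal{M}_Q(z)$.

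I expect the middle step to be the main obstacle: one must verify that $\Phi$ intertwines the polynomial zeta functions, since it is precisely the additivity of the rank function on $P\times Q$ that makes the product of the two monomials collapse into the single monomial recording the rank difference in $P\times Q$. Once that identification is established, transporting the M\"obius functions is automatic from $\Phi$ being an isomorphism, and the factorization of the sum over $\textbf{1}\otimes\textbf{1}$ is a routine distribution of a product of sums.
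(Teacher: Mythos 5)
Your proposal is correct and follows essentially the same route as the paper's own proof: show $\Phi(\zeta_z^P\otimes\zeta_z^Q)=\zeta_z^{P\times Q}$ using additivity of the rank function, transport inverses through the isomorphism to get $\Phi(\mu_z^P\otimes\mu_z^Q)=\mu_z^{P\times Q}$, and then evaluate at $\textbf{1}\otimes\textbf{1}$ and factor the resulting double sum. No gaps to report.
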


\begin{proof}

We know that $\zeta^P_z(p\otimes p^\prime)$ is $z^{(|p^\prime|-|p|)}$ if $p\leq p^\prime$, and zero otherwise.  Similarly, $\zeta_z^Q(q\otimes q^\prime)$ is $z^{(|q^\prime|-|q|)}$ if $q\leq q^\prime$ and zero otherwise.  It follows that, in the notation of the proof of Theorem~\ref{isomorphism},
\[\Phi\left(\zeta^P_z\otimes \zeta_z^Q\right)\left((p,q)\otimes(p^\prime,q^\prime)\right)=\left(\zeta^P_z(p\otimes p^\prime)\right)\left(\zeta_z^Q(q\otimes q^\prime)\right)\]
is equal to $z^{(|(p^\prime,q^\prime)|-|(p,q)|)}$ if $(p,q)\leq (p^\prime,q^\prime)$, and zero otherwise.  It follows that
\[\Phi\left(\zeta_z^P\otimes\zeta_z^Q\right)=\zeta_z^{P\times Q}\]
Since $\mu_z^P\otimes \mu_z^Q$ is the inverse of $\zeta_z^P\otimes\zeta_z^Q$ in $I_z(P)\otimes I_z(Q)$, it follows that
\[\Phi\left(\mu_z^P\otimes\mu_z^Q\right)=\mu_z^{P\times Q}.\]
Thus,
\begin{eqnarray*}
\mathcal{M}_{P\times Q}(z)&=&\mu_z^{P\times Q}(\textbf{1}_{P\times Q}\otimes\textbf{1}_{P\times Q})\\
&=& \hspace{-18pt}\sum_{(p,q),(p^\prime,q^\prime)\in P\times Q}\hspace{-18pt}\mu_z^{P\times Q}((p,q)\otimes(p^\prime,q^\prime)\\
&=& \hspace{-18pt}\sum_{(p,q),(p^\prime,q^\prime)\in P\times Q}\hspace{-18pt}\mu_z^P(p\otimes p^\prime)\mu_z^Q(q\otimes q^\prime)\\
&=&\left(\sum_{p,p^\prime\in P}\mu_z^P(p\otimes p^\prime)\right)\left(\sum_{q,q^\prime\in Q}\mu_z^Q(q\otimes q^\prime\right)\\
&=&\mu_z^P(\textbf{1}_P\otimes\textbf{1}_P)\mu_z^Q(\textbf{1}_Q \otimes\textbf{1}_Q)\\
&=&\mathcal{M}_P(z)\mathcal{M}_Q(z).
\end{eqnarray*}
\end{proof}

\section{Hilbert Series of $A(P\times Q)$}
\label{HilbertSeries}

Combining Theorem~\ref{HilbertSeriesResult} and Theorem~\ref{DirectProductResult}, we immediately obtain the following theorem regarding the Hilbert series of the direct product of posets:

\begin{prop}
Let $P$ and $Q$ be finite ranked posets.  Then
\[H(A(P\times Q),z)=\frac{1-z}{1-z\mathcal{M}_P(z)\mathcal{M}_Q(z)}.\]
\end{prop}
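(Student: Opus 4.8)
The plan is to obtain the result by direct substitution, chaining the two theorems already established. First I would invoke Theorem~\ref{HilbertSeriesResult} applied to the poset $P \times Q$, which gives
\[H(A(P\times Q),z)=\frac{1-z}{1-z\mathcal{M}_{P\times Q}(z)}.\]
Then I would apply Theorem~\ref{DirectProductResult} to rewrite the M\"obius polynomial of the product as the product of M\"obius polynomials, $\mathcal{M}_{P\times Q}(z)=\mathcal{M}_P(z)\mathcal{M}_Q(z)$, and substitute this into the denominator to reach the claimed formula.

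The one point that genuinely requires checking is the hypothesis of Theorem~\ref{HilbertSeriesResult}: that theorem is stated for finite \emph{ranked} posets, whereas Theorem~\ref{DirectProductResult} holds more generally for \emph{generalized} ranked posets. So before the substitution is legitimate I must confirm that $P\times Q$ is again a finite ranked poset whenever $P$ and $Q$ are. Finiteness is clear, since $|P\times Q|=|P|\,|Q|$. For the rank axioms, recall the product rank function $|(p,q)|=|p|_P+|q|_Q$. Axiom (i) holds because ranks are nonnegative integers, so $|(p,q)|=0$ forces $|p|_P=|q|_Q=0$, which happens exactly when both $p$ and $q$ are minimal, i.e. when $(p,q)$ is the minimal element of the product. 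For axiom (ii), I would observe that a cover $(p,q)\lessdot(p^\prime,q^\prime)$ in $P\times Q$ must have either $p\lessdot p^\prime$ with $q=q^\prime$, or $p=p^\prime$ with $q\lessdot q^\prime$: if both inequalities $p<p^\prime$ and $q<q^\prime$ were strict, then $(p,q)<(p^\prime,q)<(p^\prime,q^\prime)$ would exhibit an intermediate element, contradicting the covering relation. In either surviving case the sum $|p|_P+|q|_Q$ increases by exactly $1$, so $|(p^\prime,q^\prime)|=|(p,q)|+1$.

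With $P\times Q$ confirmed to be a finite ranked poset, Theorem~\ref{HilbertSeriesResult} applies, and the substitution described above completes the argument. I do not expect any real obstacle here: the substance of the proposition lives entirely in the two cited theorems, and the only care needed is the routine verification that the direct product preserves the ranked (as opposed to merely generalized ranked) structure so that the Hilbert series formula is available in the first place.
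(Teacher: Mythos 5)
Your proposal is correct and is exactly the paper's argument: the paper proves this proposition simply by noting that it follows immediately from combining Theorem~\ref{HilbertSeriesResult} and Theorem~\ref{DirectProductResult}. Your additional verification that $P\times Q$ is itself a finite \emph{ranked} (not merely generalized ranked) poset --- via the observation that covers in the product are covers in one coordinate with equality in the other --- is a hypothesis check the paper leaves implicit, and it is a worthwhile bit of rigor rather than a departure from the paper's route.
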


We can use this to provide a simple proof of Theorem 3 as presented in \cite{HilbertSeriesPaper}:

\begin{theorem}
\label{Boolean}
Let $2^{[n]}$ be the lattice of subsets of the set $[n]=\{1,\ldots, n\}$.  Then
\[H\left(2^{[n]},z\right)=\frac{1-z}{1-z(2-z)^n}\]
\end{theorem}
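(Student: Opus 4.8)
The plan is to realize the Boolean lattice as an $n$-fold direct product of the two-element chain and then apply the multiplicativity of the M\"obius polynomial established in Theorem~\ref{DirectProductResult}. First I would observe that $2^{[n]}$ is isomorphic, as a ranked poset, to the direct product $\prod_{i=1}^n C$, where $C=\{0<1\}$ is the two-element chain with $|0|=0$ and $|1|=1$. The identification sends a tuple $(a_1,\ldots,a_n)\in\prod_{i=1}^n C$ to the subset $\{i : a_i=1\}\subseteq[n]$; the componentwise order corresponds to containment, and the product rank function $\sum_i |a_i|$ equals the cardinality of the corresponding subset, which is exactly the rank on $2^{[n]}$. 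Since this isomorphism respects the grading, the two posets have the same M\"obius polynomial.

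Next I would compute $\mathcal{M}_C(z)$ directly from the definition. The chain $C$ has three nonzero M\"obius values, namely $\mu(0,0)=\mu(1,1)=1$ and $\mu(0,1)=-1$, contributing the terms $z^0$, $z^0$, and $-z^{1}$ respectively, so that $\mathcal{M}_C(z)=2-z$. I would then invoke Theorem~\ref{DirectProductResult}. As that theorem is stated for a product of two factors, I would apply it inductively: writing $\prod_{i=1}^n C = C\times\left(\prod_{i=1}^{n-1}C\right)$ and iterating gives $\mathcal{M}_{\prod_{i=1}^n C}(z)=\mathcal{M}_C(z)^n=(2-z)^n$. Substituting this into Theorem~\ref{HilbertSeriesResult} yields
\[
H\left(A(2^{[n]}),z\right)=\frac{1-z}{1-z\,\mathcal{M}_{2^{[n]}}(z)}=\frac{1-z}{1-z(2-z)^n},
\]
which is the desired formula.

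There is no serious obstacle in this argument; all of the substance is carried by the two structural theorems already in hand. The only points that genuinely require care are verifying that the product rank function on $\prod_{i=1}^n C$ coincides with the cardinality rank on $2^{[n]}$, so that the poset isomorphism is grading-preserving and the M\"obius polynomials agree, and noting that the two-factor form of Theorem~\ref{DirectProductResult} extends to $n$ factors by a routine induction. Everything else is the single elementary computation $\mathcal{M}_C(z)=2-z$.
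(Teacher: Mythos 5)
Your proposal is correct and follows essentially the same route as the paper: identify $2^{[n]}$ with an $n$-fold product of the two-element chain, compute the M\"obius polynomial $2-z$ of that chain, and combine Theorem~\ref{DirectProductResult} with Theorem~\ref{HilbertSeriesResult}. The only cosmetic difference is that you evaluate $\mathcal{M}_C(z)$ directly from the M\"obius function values while the paper inverts the $2\times 2$ matrix $M\left(\zeta_z^P\right)$ in the polynomial incidence algebra; your explicit attention to the rank-compatibility of the isomorphism and to the induction extending the two-factor product theorem to $n$ factors is a welcome bit of extra care.
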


\begin{proof}
Consider the poset $\{P,\leq\}$ where $P=\{x,y\}$, and $x<y$. The poset $2^{[n]}$ is isomorphic to a direct product of $n$ copies of the poset $P$.  We have
\[M\left(\zeta_z^P\right)=\left[\begin{array}{cc}
1&z\\
0&1
\end{array}\right],\]
and thus
\[M\left(\mu_z^P)\right)=\left[\begin{array}{cc}
1&z\\
0&1
\end{array}\right]^{-1}=\left[\begin{array}{cc}
1&-z\\
0&1
\end{array}\right]\]
Thus
\[\mathcal{M}_P(z)=\mu_z^P(\textbf{1}\otimes\textbf{1})=\left[
\begin{array}{cc}
1&1
\end{array}
\right]\left[
\begin{array}{cc}
1&-z\\
0&1
\end{array}\right]\left[\begin{array}{c}
1\\
1
\end{array}\right]
=2-z\]

It follows that by Theorem~\ref{DirectProductResult}, 
\[\mathcal{M}_{2^{[n]}}(z)=\mathcal{M}_{P^n}(z)=(2-z)^n,\]
and so
\[H\left(2^{[n]},z\right)=\frac{1-z}{1-z(2-z)^n}\]
\end{proof}

\section{Graded Trace Generating Functions And Direct Products}
\label{TraceFunctions}

Suppose $P$ is the direct product of $n$ posets $Q_1,\ldots,Q_n$, some of which are isomorphic.  Then there exists a subgroup of $S_n$ which acts on $P$, and thus $A(P)$, by permuting copies of $Q_k$ which are isomorphic to each other.  If $\sigma$ is an element of this subgroup, we call $\sigma$ a \textbf{factor-shuffling} automorphism of $P$.

Our goal in this section is to prove the following theorem:

\begin{theorem}
\label{shuffling}
Consider a poset $P=\prod_{k=1}^n Q_k$.  Let $\sigma$ be a factor-shuffling automorphism of $P$, with cycle decomposition in $S_n$ given by $\sigma_1\sigma_2\ldots\sigma_r$, with each $\sigma_j$ acting on factors isomorphic to $Q_j$, and in which the cycle length of each $\sigma_j$ is given by $i_j$.  Then
\[\mathcal{M}_{P^{\sigma}}(z)=\prod_{j=1}^r\mathcal{M}_{Q_j}\left(z^{i_j}\right).\]
\end{theorem}

This together with Theorem~\ref{GradedTraceResult} proved by Duffy in \cite{ColleensThesis} immediately gives us

\begin{cor}
If $P=\prod_{k=1}^n Q_k$, and $\sigma$ is a factor-shuffling automorphism of $P$ satisfying all the hypotheses of Theorem~\ref{shuffling}, then
\[Tr_\sigma(A(P),z)=\frac{1-z}{1-z\prod_{j=1}^r\mathcal{M}_{Q_j}\left(z^{i_j}\right)}.\]
\end{cor}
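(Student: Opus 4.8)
The plan is to recognize the statement as an immediate consequence of two results already in hand: Duffy's trace formula (Theorem~\ref{GradedTraceResult}) and the product formula for the Möbius polynomial of the fixed-point subposet (Theorem~\ref{shuffling}). The only genuine work is to confirm that the hypotheses of Duffy's theorem are met; once they are, the conclusion follows by a single substitution into the denominator of the trace formula.

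First I would verify that $\sigma$ genuinely induces a graded algebra automorphism of $A(P)$ of the kind to which Theorem~\ref{GradedTraceResult} applies. By definition a factor-shuffling automorphism is a poset automorphism of $P=\prod_{k=1}^n Q_k$, obtained by permuting mutually isomorphic factors; as observed in the discussion preceding Theorem~\ref{GradedTraceResult}, any poset automorphism induces an automorphism of the Hasse diagram $(P,E_P)$ and hence of $A(P)$, and this induced map respects the natural grading since it merely permutes the generating edges. I would also record that $P$ is a finite ranked poset: each $Q_k$ is ranked, a cover relation in the direct product alters exactly one coordinate by a single cover step, and the product rank function increases by one across each cover while vanishing only at the minimal element. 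Thus the two hypotheses of Theorem~\ref{GradedTraceResult} — that $P$ be a finite ranked poset and that $\sigma$ be an automorphism of it — are satisfied.

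With the hypotheses in place, Theorem~\ref{GradedTraceResult} yields
\[Tr_\sigma(A(P),z)=\frac{1-z}{1-z\mathcal{M}_{P^\sigma}(z)}.\]
The statement assumes all the hypotheses of Theorem~\ref{shuffling} outright, so that theorem evaluates the Möbius polynomial of the fixed-point subposet as $\mathcal{M}_{P^\sigma}(z)=\prod_{j=1}^r \mathcal{M}_{Q_j}(z^{i_j})$. Substituting this expression for $\mathcal{M}_{P^\sigma}(z)$ into the denominator above produces exactly the claimed formula.

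Because both ingredients are available, I expect no substantive obstacle; the result is a formal corollary. The only point requiring care is the bookkeeping of the second paragraph — ensuring that the $\sigma$ appearing in Theorem~\ref{shuffling} is literally the poset automorphism to which Duffy's theorem applies, so that the subposet $P^\sigma$ whose Möbius polynomial Theorem~\ref{shuffling} computes coincides with the $P^\sigma$ occurring in Theorem~\ref{GradedTraceResult}. All of the real combinatorial content resides in Theorem~\ref{shuffling}; the corollary simply packages that identity together with Duffy's trace formula.
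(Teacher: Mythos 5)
Your proposal is correct and follows exactly the paper's route: the paper derives this corollary immediately by combining Theorem~\ref{shuffling} with Duffy's trace formula (Theorem~\ref{GradedTraceResult}), which is precisely your substitution argument. Your added verification that $P=\prod_{k=1}^n Q_k$ is a finite ranked poset and that $\sigma$ induces a graded automorphism of $A(P)$ is sound bookkeeping the paper leaves implicit, but it does not change the approach.
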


In our proof of these results, we will find the following definition to be useful:

\begin{defn}
Let $P$ be a generalized ranked poset with rank function $|\cdot|_P$.  We define $^{\times n}P$ to be the generalized ranked poset with the same underlying poset $P$, but with rank function given by \[|p|_{(^{\times n}P)}=n\cdot|p|_P.\]
\end{defn}

\begin{lemma}
\label{Cycle}
Let $Q$ be a generalized ranked poset, and let $P=Q^n$.  Let $\sigma$ be an automorphism of $P$ which cyclically permutes the copies of $Q$ in $P$.  Then \[P^\sigma\cong (^{\times n}Q).\]
\end{lemma}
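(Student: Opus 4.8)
The plan is to exhibit an explicit isomorphism of generalized ranked posets $\theta : {}^{\times n}Q \to P^\sigma$ and to verify directly that it preserves both the order relation and the generalized rank function. Since $P^\sigma$ carries the order (and, I will take it, the generalized rank function) inherited by restriction from $P$, everything reduces to understanding the fixed-point set concretely.

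First I would identify $P^\sigma$ explicitly. Writing $P = Q^n = \{(q_1,\ldots,q_n) : q_i \in Q\}$, the automorphism $\sigma$ acts by a single $n$-cycle $\tau \in S_n$ on the coordinates, so that $\sigma(q_1,\ldots,q_n) = (q_{\tau^{-1}(1)},\ldots,q_{\tau^{-1}(n)})$. A tuple is fixed exactly when $q_i = q_{\tau^{-1}(i)}$ for all $i$; because $\tau$ is an $n$-cycle, the indices $1,\ldots,n$ form a single orbit, so iterating this condition forces $q_1 = q_2 = \cdots = q_n$. Hence $P^\sigma = \{(q,q,\ldots,q) : q \in Q\}$, the diagonal of $Q^n$. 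This is the step that genuinely uses the hypothesis that $\sigma$ cyclically permutes the factors.

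Next I would set $\theta(q) = (q,q,\ldots,q)$. It is immediate that $\theta$ is a bijection from the underlying set of $Q$ (equivalently ${}^{\times n}Q$) onto $P^\sigma$, and that it is an order isomorphism: by the definition of the product order, $(q,\ldots,q) \leq (q',\ldots,q')$ in $P$ if and only if $q \leq q'$ componentwise, i.e. if and only if $q \leq q'$ in $Q$, and the order on $P^\sigma$ is the restriction of that of $P$.

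Finally, and this is the only step with real content, I would check that $\theta$ matches the two rank functions. The rank of a diagonal element in $P$ is $\left|(q,\ldots,q)\right|_P = \sum_{i=1}^{n} |q|_Q = n\cdot|q|_Q$, which is by definition exactly $|q|_{(^{\times n}Q)}$. Thus $\theta$ carries the generalized rank function of ${}^{\times n}Q$ onto the rank function inherited from $P$ on $P^\sigma$, so $\theta$ is an isomorphism of generalized ranked posets. I do not anticipate a genuine obstacle here: the entire purpose of introducing the ${}^{\times n}Q$ construction is precisely to absorb the factor of $n$ that the product rank function produces on the diagonal, so once the fixed-point set has been pinned down as the diagonal, the rank computation closes the argument with only routine bookkeeping.
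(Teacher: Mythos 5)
Your proposal is correct and follows essentially the same route as the paper's proof: identify $P^\sigma$ as the diagonal $\{(q,\ldots,q) : q \in Q\}$, observe that the product order restricts on the diagonal to the order of $Q$, and check that $|(q,\ldots,q)|_P = n\cdot|q|_Q = |q|_{(^{\times n}Q)}$. The only difference is that you spell out the single-orbit argument for why fixedness forces all coordinates equal, which the paper simply asserts.
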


\begin{proof}
An element $(q_1,\ldots,q_n)\in P$ is fixed by $\sigma$ if and only if
\[q_1=q_2=\ldots=q_n.\]
It follows that $P^\sigma$ consists of elements of the form $(q,\ldots,q)$, for $q\in Q$.  We have $(q,\ldots,q)\leq (q^\prime,\ldots,q^\prime)$ if and only if $q\leq q^\prime$.  For any $(q,\ldots,q)\in P$, we have
\[|(q,\ldots,q)|_P=n\cdot|q|_Q=|q|_{(^{\times n}Q)},\]
which completes our proof.
\end{proof}

\begin{lemma}
\label{ProductOfAut}
Let $P=Q\times R$, and let $\sigma:P\rightarrow P$ be an automorphism such that $\sigma((q,r))=(\phi(q),\psi(r))$ for automorphisms $\phi: Q\rightarrow Q$ and $\psi:R\rightarrow R$.  Then
\[P_\sigma=Q_\phi\times R_\psi.\]
\end{lemma}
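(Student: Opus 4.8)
The plan is to verify that the two generalized ranked posets $P_\sigma$ and $Q_\phi\times R_\psi$ agree on three levels: as sets, as partially ordered sets, and as ranked posets. Since all three structures on $P_\sigma$ are inherited from the ambient product $P=Q\times R$, while the structures on $Q_\phi\times R_\psi$ are assembled from the inherited structures on the two factors, each comparison should reduce to a coordinatewise statement.

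First I would identify the underlying sets. An element $(q,r)\in P$ lies in $P_\sigma$ precisely when $\sigma((q,r))=(q,r)$, that is, when $(\phi(q),\psi(r))=(q,r)$. Because equality in the product is coordinatewise, this holds if and only if $\phi(q)=q$ and $\psi(r)=r$, i.e.\ if and only if $q\in Q_\phi$ and $r\in R_\psi$. Hence the underlying set of $P_\sigma$ is exactly $Q_\phi\times R_\psi$.

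Next I would check the order relation. For fixed points $(q,r),(q^\prime,r^\prime)\in P_\sigma$, the order inherited from $P=Q\times R$ declares $(q,r)\leq(q^\prime,r^\prime)$ if and only if $q\leq_Q q^\prime$ and $r\leq_R r^\prime$. Since $Q_\phi$ and $R_\psi$ carry the orders inherited from $Q$ and $R$ respectively, the direct-product order on $Q_\phi\times R_\psi$ imposes exactly the same condition, so the two posets have identical order relations. Finally I would confirm that the rank functions coincide: for $(q,r)\in P_\sigma$ the rank in $P$ is $|q|_Q+|r|_R$, which is precisely the rank assigned to $(q,r)$ in the product $Q_\phi\times R_\psi$, whose factor rank functions are the restrictions of those on $Q$ and $R$.

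There is no serious obstacle here; the argument is a routine three-part identification. The only point requiring a small amount of care is the well-definedness of the objects $Q_\phi$ and $R_\psi$ themselves, which presupposes that $\phi$ and $\psi$ are genuine automorphisms of the individual factors $Q$ and $R$ — and this is supplied by hypothesis. Combining the three agreements yields $P_\sigma=Q_\phi\times R_\psi$ as generalized ranked posets.
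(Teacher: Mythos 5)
Your proof is correct and takes essentially the same approach as the paper: the coordinatewise fixed-point identification $\sigma((q,r))=(q,r)\iff\phi(q)=q$ and $\psi(r)=r$ is exactly the paper's argument. You additionally verify that the inherited order and rank functions agree, which the paper leaves implicit; this is routine but makes the identification of generalized ranked posets fully explicit.
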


\begin{proof}
If $(q,r)\in P_\sigma$, then $(q,r)$ is fixed by $\sigma$.  This is true if and only if $q$ is fixed by $\phi$ and $r$ is fixed by $\psi$.  Thus $(q,r)\in P_\sigma$ if and only if it is in $Q_\phi\times R_\psi$.
\end{proof}

\begin{lemma}
\label{MobOfnQ}
\[\mathcal{M}_{\left(^{\times n}Q\right)}(z)=\mathcal{M}_Q(z^n)\]
\end{lemma}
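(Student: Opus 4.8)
The plan is to expand both sides directly from the definition of the Möbius polynomial and to observe that they agree term for term. The essential observation is that $^{\times n}Q$ and $Q$ have the same underlying poset, and in particular the same partial order relation $\leq$. Since the Möbius function $\mu(p,q)$ is defined recursively in terms of the intervals $[p,q]$ of the underlying poset alone—and makes no reference to any choice of rank function—the Möbius function of $^{\times n}Q$ coincides with that of $Q$. Thus the passage from $Q$ to $^{\times n}Q$ changes nothing but the exponents appearing in the generating polynomial.

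First I would write out the left-hand side using the definition of the Möbius polynomial applied to $^{\times n}Q$, substituting its rank function $|p|_{(^{\times n}Q)}=n\cdot|p|_Q$. This gives
\[\mathcal{M}_{(^{\times n}Q)}(z)=\sum_{p\leq q}\mu(p,q)z^{|q|_{(^{\times n}Q)}-|p|_{(^{\times n}Q)}}=\sum_{p\leq q}\mu(p,q)z^{n|q|_Q-n|p|_Q}.\]
Next I would factor the exponent as $n(|q|_Q-|p|_Q)$ and use the identity $z^{n(|q|_Q-|p|_Q)}=(z^n)^{|q|_Q-|p|_Q}$, so that the sum rewrites as $\sum_{p\leq q}\mu(p,q)(z^n)^{|q|_Q-|p|_Q}$, which is precisely $\mathcal{M}_Q(z^n)$ by the definition of the Möbius polynomial.

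There is no substantive obstacle here: the result is immediate once one notes that forming $^{\times n}Q$ leaves the order structure (and hence $\mu$) untouched while uniformly rescaling every rank by a factor of $n$, and that this rescaling of exponents is exactly what the substitution $z\mapsto z^n$ effects in the generating polynomial. The only point deserving explicit mention is the invariance of the Möbius function under the change of rank function, which I would state plainly as justification for reusing the same $\mu(p,q)$ on both sides.
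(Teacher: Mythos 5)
Your proof is correct, but it takes a genuinely different route from the paper's. You expand $\mathcal{M}_{(^{\times n}Q)}(z)$ directly from the defining sum, using two observations: the M\"obius function $\mu(p,q)$ is determined by the order relation alone, so it is unchanged when the rank function is rescaled, and rescaling every rank by $n$ converts each monomial $z^{|q|-|p|}$ into $(z^n)^{|q|-|p|}$. The paper instead stays inside the polynomial incidence algebra formalism of Section~\ref{PolynomialIncidence}: it observes that $I_z(^{\times n}Q)$ and $I_{z^n}(Q)$ are literally the same collection of maps under the same convolution, so their zeta functions coincide, hence by uniqueness of inverses their polynomial M\"obius functions coincide, $\mu^z_{(^{\times n}Q)}=\mu^{z^n}_Q$; evaluating at $\textbf{1}\otimes\textbf{1}$ then yields the lemma. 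Your approach is more elementary and self-contained---it needs only the definition of $\mathcal{M}_P$ and the rank-independence of $\mu$, which you correctly identify as the one point requiring explicit justification. What the paper's approach buys is uniformity with its surrounding machinery: the equality of M\"obius functions falls out of the algebra identification without any appeal to the combinatorial recursion for $\mu$, and the evaluation-at-$\textbf{1}\otimes\textbf{1}$ device is exactly the one used in the proof of Theorem~\ref{DirectProductResult}, so the lemma slots into the same framework as the rest of the paper.
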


\begin{proof}
Both $I_z(^{\times n}Q)$ and $I_{z^n}(Q)$ consist of exactly the functions
\[\phi: Q\otimes Q\rightarrow \mathbb{F}[z]\]
satisfying the following two conditions:
\begin{itemize}
\item[(i)] $\phi(q,q^\prime)=0$ if $q\not\leq q^\prime$.
\item[(ii)] $\phi(q,q^\prime)\in z^{\left(n\cdot|q^\prime|_Q-n\cdot|q|_Q\right)}\mathbb{F}$ if $q\leq q^\prime$.
\end{itemize}
It follows that $I_z(^{\times n} Q)=I_{z^n}Q$.  In particular, we have $\mu_{(^{\times n}Q)}^z=\mu_Q^{z^n}$. This gives us
\begin{eqnarray*}
\mathcal{M}_{\left(^{\times n}Q\right)}(z)&=&\mu_{\left(^{\times n}Q\right)}^z(\textbf{1}\otimes \textbf{1})\\
&=&\mu_{Q}^{z^{n}}(\textbf{1}\otimes \textbf{1})\\
&=&\mathcal{M}_Q(z^n).
\end{eqnarray*}
\end{proof}

\begin{proof}[Proof of Theorem~\ref{shuffling}]
Let $P=\prod_{k=1}^n Q_k$, and let $\sigma$ be a factor-shuffling automorphism of $P$ with cycle decomposition in $S_n$ given by $\sigma_1\sigma_2\ldots\sigma_r$, with each $\sigma_j$ acting on factors isomorphic to $Q_j$, and in which the cycle length of each $\sigma_j$ given by $i_j$.  Then we have
\[P=(Q_1)^{i_1}\times (Q_2)^{i_2}\times\ldots\times(Q_r)^{i_r}.\]
By Lemmas~\ref{Cycle} and \ref{ProductOfAut}, this gives us
\[P_\sigma\cong\left(^{\times i_1}Q_1\right)\times\left(^{\times i_2}Q_2\right)\times\ldots\times\left(^{\times i_r}Q_r\right).\]
Thus we have
\[\mathcal{M}_P(z)=\prod_{j=1}^r\mathcal{M}_{\left(^{\times i_j}Q_j\right)}(z),\]
and Lemma~\ref{MobOfnQ} gives us
\[\mathcal{M}_P(z)=\prod_{j=1}^r\mathcal{M}_{Q_j}\left(z^{i_j}\right)\]
\end{proof}

This, together with Theorem~\ref{GradedTraceResult} from \cite{ColleensDissertation} gives us a proof of Proposition 5.4 from \cite{ColleensDissertation}:

\begin{prop}
 Let $2^{[n]}$ be the lattice of subsets of the set $[n]=\{1,\ldots,n\}$, and let $\sigma$  be an automorphism of $P$ permuting the $n$ single-element sets, with cycle decomposition $\sigma_1\sigma_2\ldots\sigma_r$, with each cycle $\sigma_j$ of length $i_j$.  Then
 \[Tr_\sigma(2^{[n]},z)=\frac{1-z}{1-z\prod_{j=1}^r(2-z^{i_j})}\]
\end{prop}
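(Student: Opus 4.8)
The plan is to obtain this as an essentially immediate corollary of Theorem~\ref{shuffling} and Duffy's trace formula (Theorem~\ref{GradedTraceResult}), reusing the product decomposition of the Boolean lattice from the proof of Theorem~\ref{Boolean}. Here $Tr_\sigma(2^{[n]},z)$ denotes the graded trace generating function of $\sigma$ acting on the splitting algebra $A(2^{[n]})$.

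First I would recall from the proof of Theorem~\ref{Boolean} that $2^{[n]}\cong C^n$, where $C=\{x,y\}$ is the two-element chain with $x<y$, and that $\mathcal{M}_C(z)=2-z$. Under the isomorphism sending a subset $S\subseteq[n]$ to the tuple $(\epsilon_1,\ldots,\epsilon_n)$ with $\epsilon_i=y$ when $i\in S$ and $\epsilon_i=x$ otherwise, a permutation of the $n$ singletons acts by permuting the coordinates of these tuples. This is exactly a factor-shuffling automorphism of $C^n$ in the sense of Theorem~\ref{shuffling}, so $\sigma$ satisfies the hypotheses of that theorem, with every isomorphism type $Q_j$ equal to $C$ and with the cycle lengths $i_1,\ldots,i_r$ of $\sigma$ as the exponents (fixed singletons simply appearing as cycles of length $1$, with $\sum_j i_j=n$).

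Next I would apply Theorem~\ref{shuffling} directly. Since each $Q_j=C$, it gives
\[\mathcal{M}_{(2^{[n]})^\sigma}(z)=\prod_{j=1}^r\mathcal{M}_C\!\left(z^{i_j}\right)=\prod_{j=1}^r\left(2-z^{i_j}\right),\]
using $\mathcal{M}_C(z)=2-z$. Substituting this M\"obius polynomial into Theorem~\ref{GradedTraceResult} yields
\[Tr_\sigma(2^{[n]},z)=\frac{1-z}{1-z\,\mathcal{M}_{(2^{[n]})^\sigma}(z)}=\frac{1-z}{1-z\prod_{j=1}^r\left(2-z^{i_j}\right)},\]
as claimed.

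The argument carries no genuine difficulty, since both of the substantive inputs are already established; the only point requiring care is the verification that permuting the singletons of $2^{[n]}$ corresponds, under the isomorphism $2^{[n]}\cong C^n$, to a coordinate permutation of $C^n$, i.e.\ to a legitimate factor-shuffling automorphism to which Theorem~\ref{shuffling} applies. Once that identification is recorded, the result follows by direct substitution.
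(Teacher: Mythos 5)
Your proof is correct and follows essentially the same route as the paper: decompose $2^{[n]}\cong C^n$ where $C$ is the two-element chain with $\mathcal{M}_C(z)=2-z$, apply Theorem~\ref{shuffling} to get $\mathcal{M}_{(2^{[n]})^\sigma}(z)=\prod_{j=1}^r(2-z^{i_j})$, and substitute into Theorem~\ref{GradedTraceResult}. Your write-up is in fact more careful than the paper's own two-line proof, which omits the verification that permuting singletons corresponds to a factor-shuffling coordinate permutation and cites Theorem~\ref{DirectProductResult} where Theorem~\ref{shuffling} is evidently the result intended.
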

\begin{proof}
As stated in the proof of Proposition \ref{Boolean}, $2^{[n]}\cong P^n$, where $P$ is the two-element poset $\{x,y\}$ with $x<y$.  We have $\mathcal{M}(P)=2-z$.  This together with Theorem~\ref{DirectProductResult} gives us our result.
\end{proof}

\section{The Poset Of Factors Of $n$}
\label{FactorsOfN}

Here we will use the results from Sections \ref{HilbertSeries} and \ref{TraceFunctions} to calculate the Hilbert series and graded trace generating functions associated to the poset $P$ of factors of a natural number $n$.  We can define a rank function $|\cdot|$ on $P$ by defining $|m|$  to be $\Omega(m)$, the number of prime factors of $m$ counting multiplicity. We will prove the following results:

\begin{theorem}
\label{HilbertSeriesFactors}
Let $P$ be the ranked poset of factors of $n$, ordered by divisibility.    If $n$  has prime factorization $p_1^{s_1}p_2^{s_2}\ldots p_k^{s_k}$, for distinct primes $p_1,\ldots,p_k$, then
\[H(A(P),z)=\frac{1-z}{1-z\prod_{j=1}^{r}\left(1+s_j(1-z)\right)}.\]
\end{theorem}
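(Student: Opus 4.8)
The plan is to recognize $P$ as a direct product of chains, compute the Möbius polynomial of a single chain by hand, and then invoke Theorem~\ref{DirectProductResult} followed by Theorem~\ref{HilbertSeriesResult}.

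First I would set up the isomorphism. A factor $m$ of $n=p_1^{s_1}p_2^{s_2}\cdots p_k^{s_k}$ is uniquely determined by its exponent vector $(a_1,\ldots,a_k)$ with $0\le a_j\le s_j$, and $m$ divides $m'$ if and only if $a_j\le a_j'$ for every $j$. This exhibits $P$ as the direct product $C_{s_1}\times C_{s_2}\times\cdots\times C_{s_k}$, where $C_s$ denotes the chain $\{0,1,\ldots,s\}$ under its usual order. The crucial point to verify is that this is an isomorphism of \emph{ranked} posets: since $|m|=\Omega(m)=a_1+\cdots+a_k$, the rank function on $P$ is exactly the product rank function $\sum_j|a_j|$, so the grading is respected and Theorem~\ref{DirectProductResult} will apply.

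Next I would compute $\mathcal{M}_{C_s}(z)$ directly from the definition. On a chain the Möbius function vanishes except for $\mu(a,a)=1$ and $\mu(a,a+1)=-1$. There are $s+1$ pairs of the first kind, each contributing $z^0$, and $s$ pairs of the second kind, each contributing $-z$, so
\[
\mathcal{M}_{C_s}(z)=(s+1)-sz=1+s(1-z).
\]
Applying Theorem~\ref{DirectProductResult} inductively across the $k$ factors then gives
\[
\mathcal{M}_P(z)=\prod_{j=1}^{k}\mathcal{M}_{C_{s_j}}(z)=\prod_{j=1}^{k}\bigl(1+s_j(1-z)\bigr),
\]
and substituting this into the Hilbert series formula of Theorem~\ref{HilbertSeriesResult} produces the stated expression for $H(A(P),z)$.

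I do not expect a serious obstacle here: all of the real content is contained in the identification of $P$ with a product of chains and in the two-term Möbius computation on a chain, both of which are routine. The only step deserving genuine care is confirming that the divisibility order and the $\Omega$-grading jointly match the product structure, so that the hypotheses of Theorem~\ref{DirectProductResult} are actually met. (Note also that the product index in the statement should run to $k$, the number of distinct primes, rather than to $r$.)
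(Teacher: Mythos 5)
Your proposal is correct and takes essentially the same route as the paper: identify $P$ with the product of chains $C_{s_1}\times\cdots\times C_{s_k}$ via exponent vectors, compute $\mathcal{M}_{C_s}(z)=1+s(1-z)$, and then apply Theorem~\ref{DirectProductResult} followed by Theorem~\ref{HilbertSeriesResult}. The only cosmetic difference is that you evaluate the chain's M\"obius polynomial directly from the definition of $\mu$ rather than by inverting the matrix of $\zeta_z$ in the polynomial incidence algebra as the paper does; your observations that the chain should have $s+1$ elements and that the product index should run to $k$ rather than $r$ are both correct.
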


\begin{theorem}
\label{TraceFunctionFactors}
Let $P$ be the ranked poset of factors of $n$, ordered by divisibility.    Suppose that $n$  has prime factorization $p_1^{s_1}p_2^{s_2}\ldots p_k^{s_k}$, for distinct primes $p_1,\ldots,p_k$, and that $\sigma$ is a permutation which acts on $P$ by permuting the primes which appear with the same multiplicity in the factorization of $n$.  If $\sigma$ has cycle decomposition $\sigma_1\sigma_2\ldots\sigma_r$ with each cycle $\sigma_j$ of length $i_j$, permuting primes that appear with multiplicity $s_j$, then
\[Tr_\sigma(A(P),z)=\frac{1-z}{1-z\prod_{j=1}^r(1+s_j\left(1-z^{i_j})
\right)}\]
\end{theorem}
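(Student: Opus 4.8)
The plan is to realize the poset $P$ of factors of $n$ as a direct product of chains, recognize $\sigma$ as a factor-shuffling automorphism, and then apply Theorem~\ref{shuffling} and Theorem~\ref{GradedTraceResult} directly. First I would set up the structural isomorphism: writing each factor $m\mid n$ as $m=p_1^{a_1}\cdots p_k^{a_k}$ with $0\leq a_i\leq s_i$, divisibility corresponds to componentwise comparison of exponent vectors, and the rank $\Omega(m)=a_1+\cdots+a_k$ is exactly the sum of the component ranks. Hence $P\cong C_{s_1}\times\cdots\times C_{s_k}$, where $C_s$ denotes the chain $0<1<\cdots<s$ equipped with the rank function $|a|=a$. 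The point of this step is that it is an isomorphism of \emph{ranked} posets, so the $z$-graded bookkeeping in the M\"obius polynomial is preserved.

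Next I would compute the M\"obius polynomial of a single chain. On $C_s$ the M\"obius function satisfies $\mu(i,i)=1$, $\mu(i,i+1)=-1$, and $\mu(i,j)=0$ whenever $j-i\geq 2$. Counting the $s+1$ diagonal pairs and the $s$ covering pairs gives
\[
\mathcal{M}_{C_s}(z)=(s+1)-sz=1+s(1-z).
\]
I would then identify $\sigma$ as a factor-shuffling automorphism: two chains $C_{s_a}$ and $C_{s_b}$ are isomorphic precisely when $s_a=s_b$, i.e.\ when $p_a$ and $p_b$ occur with the same multiplicity in $n$. Since $\sigma$ permutes only primes of equal multiplicity, under the identification $P\cong\prod_i C_{s_i}$ it permutes isomorphic chain factors, and the cycle $\sigma_j$ cyclically permutes the $i_j$ copies of the chain $C_{s_j}$ associated to primes of multiplicity $s_j$.

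With these identifications in place the conclusion is a direct substitution. Applying Theorem~\ref{shuffling} with $Q_j=C_{s_j}$, and inserting the chain computation above, I obtain
\[
\mathcal{M}_{P^\sigma}(z)=\prod_{j=1}^r\mathcal{M}_{C_{s_j}}\!\left(z^{i_j}\right)=\prod_{j=1}^r\left(1+s_j\left(1-z^{i_j}\right)\right),
\]
and feeding this into Theorem~\ref{GradedTraceResult} yields the claimed formula for $Tr_\sigma(A(P),z)$. The analogous Hilbert series statement, Theorem~\ref{HilbertSeriesFactors}, would follow the same way using Theorem~\ref{DirectProductResult} in place of Theorem~\ref{shuffling}, with $\mathcal{M}_P(z)=\prod_j\mathcal{M}_{C_{s_j}}(z)=\prod_j(1+s_j(1-z))$.

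The main obstacle is not any single hard computation but the careful verification that the abstract hypotheses of Theorem~\ref{shuffling} are genuinely satisfied: that the exponent-vector correspondence is an isomorphism of ranked posets (so that ranks, and hence the exponents of $z$, transfer correctly), and that the phrase ``permuting primes of equal multiplicity'' translates precisely into ``permuting mutually isomorphic factors'' in the chain decomposition, so that $\sigma$ really is factor-shuffling in the technical sense required. Once these identifications are pinned down, the chain M\"obius polynomial and the two cited theorems do all the remaining work.
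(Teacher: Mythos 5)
Your proposal is correct and follows essentially the same route as the paper: realize $P$ as a product of chains via exponent vectors, observe that permuting primes of equal multiplicity is a factor-shuffling automorphism permuting isomorphic chain factors, and combine Theorem~\ref{shuffling} with Theorem~\ref{GradedTraceResult}. The only cosmetic difference is that you compute $\mathcal{M}_{C_s}(z)=1+s(1-z)$ directly from the M\"obius function of a chain, whereas the paper inverts the matrix of $\zeta_z^{C_s}$; both yield the same lemma.
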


\begin{lemma}
Let $C_s$ be the chain of length $s$.  That is, $C_s=\{1,2,\ldots,s\}$, with the usual ordering.  Then
\[\mathcal{M}_{C_s}(z)=1+s(1-z).\]
\end{lemma}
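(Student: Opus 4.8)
The plan is to compute directly, mirroring the matrix computation used in the proof of Theorem~\ref{Boolean}. First I would fix notation for the chain: the phrase \emph{chain of length $s$} should be read as $s$ covering relations, so $C_s$ consists of $s+1$ elements $e_0\lessdot e_1\lessdot\cdots\lessdot e_s$ whose rank function takes the consecutive values $|e_i|=i$ for $0\le i\le s$ (this matches the verified case $s=1$, the two-element poset, whose Möbius polynomial $2-z$ was computed in Theorem~\ref{Boolean}). In particular each covering relation $e_i\lessdot e_{i+1}$ contributes a rank gap of exactly $1$, and $e_i\le e_j$ precisely when $i\le j$, which is all the order-theoretic information I need.

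With this ordering in hand, I would write down the matrix $M(\zeta_z^{C_s})$ of the polynomial zeta function in $I_z(C_s)$. Its $(i,j)$ entry is $\zeta_z^{C_s}(e_i\otimes e_j)=z^{\,j-i}$ for $i\le j$ and $0$ otherwise, so it is the upper-triangular Toeplitz matrix $\sum_{k\ge 0}z^k N^k=(I-zN)^{-1}$, where $N$ is the nilpotent shift matrix carrying $1$'s on the superdiagonal. Inverting gives $M(\mu_z^{C_s})=I-zN$, the matrix with $1$ along the diagonal, $-z$ along the superdiagonal, and $0$ elsewhere. Equivalently, one can read off these Möbius values directly from the defining recurrence $\mu(p,q)=-\sum_{p\le r<q}\mu(p,r)$ by induction on $j-i$: the diagonal terms equal $1$, each cover contributes $-1$, and every longer interval telescopes to $0$.

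Finally, using $\mathcal{M}_{C_s}(z)=\mu_z^{C_s}(\mathbf{1}\otimes\mathbf{1})=\mathbf{1}^T M(\mu_z^{C_s})\mathbf{1}$, the Möbius polynomial is simply the sum of all entries of $I-zN$. There are $s+1$ diagonal entries, each equal to $1$, and $s$ superdiagonal entries, each equal to $-z$, so $\mathcal{M}_{C_s}(z)=(s+1)-sz=1+s(1-z)$, as claimed. I expect no genuine obstacle here: the mathematical content is entirely the routine inversion of the zeta matrix (or the one-line induction for the Möbius function), and the only point demanding care is the off-by-one bookkeeping, since the chain of length $s$ has $s+1$ elements and $s$ covers — which is exactly what makes the constant term $s+1$ and the coefficient of $z$ equal to $-s$.
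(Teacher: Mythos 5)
Your proof is correct and takes essentially the same approach as the paper: write down the matrix of $\zeta_z^{C_s}$, invert it to get the M\"obius matrix with $1$'s on the diagonal and $-z$'s on the superdiagonal, and sum its entries via $\mathbf{1}^T M(\mu_z^{C_s})\mathbf{1}$ to obtain $(s+1)-sz$. Your explicit reading of ``chain of length $s$'' as the $(s+1)$-element poset with $s$ covers is a worthwhile clarification, since the paper's stated definition $C_s=\{1,\ldots,s\}$ literally has only $s$ elements and is inconsistent with its own $(s+1)\times(s+1)$ matrix computation, which is the one that yields the claimed formula.
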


\begin{proof}
We have
\[M\left(\zeta_z^{C_s}\right)=\left[
\begin{array}{ccccc}
1&t&z^2&\cdots&z^s\\
0&1&z&\cdots&z^{s-1}\\
0&0&1&\cdots&z^{s-2}\\
\vdots&\vdots&\vdots&\ddots&\vdots\\
0&0&0&\cdots&1\\
\end{array}
\right],\]
and so
\[M\left(\mu_z^{C_s}\right)=\left[
\begin{array}{ccccc}
1&-z&0&\cdots&0\\
0&1&-z&\cdots&0\\
0&0&1&\cdots&\vdots\\
\vdots&\vdots&\vdots&\ddots&-z\\
0&0&0&\cdots&1\\
\end{array}
\right].\]
It follows that 
\[\mathcal{M}_{C_s}(z)=\mu_z^{C_s}(\textbf{1}\otimes \textbf{1})=(s+1)-zs=1+s(1-z)\]
\end{proof}

\begin{lemma}
Let $P$ be the ranked poset of factors of $n$, ordered by divisibility.  Suppose that $n$ has prime factorization $p_1^{s_1}p_2^{s_2}\ldots p_k^{s_k}$ for distinct primes $p_1,\ldots,p_k$.  Then
\[P\cong C_{s_1}\times C_{s_2}\times\ldots\times C_{s_k}.\]
\end{lemma}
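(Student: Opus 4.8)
The plan is to exhibit one explicit map and verify the three conditions that make it an isomorphism of ranked posets: that it is a bijection, that it preserves and reflects the divisibility order, and that it respects the rank functions. First I would invoke the fundamental theorem of arithmetic: every factor $m$ of $n$ has a unique expression $m=p_1^{a_1}p_2^{a_2}\cdots p_k^{a_k}$, and the condition $m\mid n$ is exactly the componentwise bound $0\leq a_j\leq s_j$ for each $j$. This lets me define
\[\Theta: P \to C_{s_1}\times\cdots\times C_{s_k}, \qquad \Theta\!\left(\prod_{j=1}^k p_j^{a_j}\right)=(a_1,\ldots,a_k),\]
where I identify the exponent $a_j$ with the corresponding element of the chain $C_{s_j}$. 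Uniqueness of the factorization gives injectivity, and the bound $a_j\leq s_j$ guarantees that every tuple of the product is hit, so $\Theta$ is a bijection.

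Next I would check that $\Theta$ is an order isomorphism. The key arithmetic fact is that for $m=\prod_j p_j^{a_j}$ and $m'=\prod_j p_j^{a_j'}$ we have $m\mid m'$ if and only if $a_j\leq a_j'$ for every $j$. On the other side, the order on the direct product is, by the definition in Section~\ref{DirectProducts}, precisely the componentwise order: $(a_1,\ldots,a_k)\leq(a_1',\ldots,a_k')$ if and only if $a_j\leq a_j'$ for all $j$. Hence $m\leq_P m'$ in the divisibility order if and only if $\Theta(m)\leq\Theta(m')$ in the product, so both $\Theta$ and its inverse are order-preserving.

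Finally I would confirm that $\Theta$ respects the rank functions, which is what upgrades the order isomorphism to an isomorphism of ranked posets. The rank of $m$ in $P$ is $\Omega(m)=\sum_{j=1}^k a_j$, the number of prime factors counted with multiplicity. In each chain $C_{s_j}$ the rank of the element corresponding to $a_j$ is $a_j$ itself, and the rank function on the direct product is defined to be the sum of the ranks in the factors. Therefore $|\Theta(m)|=\sum_j a_j=\Omega(m)=|m|_P$, so the rank functions agree under $\Theta$ and the proof is complete.

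I expect no deep obstacle, since this is essentially the classical fact that the divisor lattice of $n$ is a product of chains; the only point that demands care is the bookkeeping of the index shift between the exponents $\{0,1,\ldots,s_j\}$ and the elements of $C_{s_j}$, so that the rank of each chain element is read off correctly as its exponent. Once that identification is fixed consistently, each of the three verifications reduces to unique factorization together with the definitions of the product order and the product rank function.
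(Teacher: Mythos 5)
Your proposal is correct and uses exactly the same map as the paper, $\phi(p_1^{a_1}\cdots p_k^{a_k})=(a_1,\ldots,a_k)$; the paper simply states this map and omits all verification, whereas you supply the bijectivity, order-isomorphism, and rank-compatibility checks. Your care with the indexing of $C_{s_j}$ is well placed, since the lemma needs the chain of exponents $\{0,1,\ldots,s_j\}$ with $s_j+1$ elements (consistent with the paper's computation $\mathcal{M}_{C_s}(z)=(s+1)-sz$, though not with its literal definition $C_s=\{1,\ldots,s\}$).
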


\begin{proof}
The isomorphism is given by the map $\phi:P\rightarrow C_{s_1}\times\ldots\times C_{s_k}$, defined by
\[\phi(p_1^{a_1}p_2^{a_2}\ldots p_k^{a_k})=(a_1,a_2,\ldots,a_k).\]
\end{proof}

These two lemmas allow us to prove Theorems~\ref{HilbertSeriesFactors} and \ref{TraceFunctionFactors}, as shown below.

\begin{proof}[Proof of Theorem \ref{HilbertSeriesFactors}]

Let $P$ be the ranked poset of factors of $n$, ordered by divisibility, and let $n$ have prime factorization $p_1^{s_1}\ldots p_k^{s_k}$, for distinct primes $p_1,\ldots,p_k$.  We have
\begin{eqnarray*}
\mathcal{M}_P(z)&=&\prod_{j=1}^r\mathcal{M}_{C_{s_j}}(z)\\
&=&\prod_{j=1}^r\left(1+s_j(1-z)\right).
\end{eqnarray*}
This gives us
\[H(A(P),z)=\frac{1-z}{1-z\prod_{j=1}^{r}\left(1+s_j(1-z)\right)}.\]
\end{proof}

\begin{proof}[Proof of Theorem \ref{TraceFunctionFactors}]
Let $P$ be the ranked poset of factors of $n$, ordered by divisibility, and let $n$ have prime factorization $p_1^{s_1}\ldots p_k^{s_k}$, for distinct primes $p_1,\ldots,p_k$.  If $\sigma$ permutes primes which appear with the same multiplicity in the factorization of $n$, then $\sigma$ is a factor-shuffling automorphism of $P$.  If $\sigma$ has cycle decomposition $\sigma_1\sigma_2\ldots\sigma_r$, with each cycle $\sigma_j$ of length $i_j$, acting on copies of $C_{s_j}$, then we have
\begin{eqnarray*}
\mathcal{M}_{P^\sigma}(z)&=&\prod_{j=1}^r \mathcal{M}_{\left(\left(C_{s_j}\right)^{i_j}\right)^{\sigma_j}}(z)\\
&=&\prod_{j=1}^r(1+s_j(1-z^{i_j})).
\end{eqnarray*}
This gives us
\[Tr_\sigma(A(P),z)=\frac{1-z}{1-z\prod_{j=1}^r(1+s_j\left(1-z^{i_j})
\right)}\]
\end{proof}

\bibliographystyle{plain}

\bibliography{DirectProducts}

\end{document}